\newtheorem{theorem}{Theorem}[section]
\newtheorem{lemma}[theorem]{Lemma}
\newcommand{\mfr}[1]{\mathfrak{#1}}
\newcommand{\mr}[1]{\mathrm{#1}}
\newcommand{\ZZ}{\mathbb{Z}}
\newcommand{\FF}{\mathbb{F}}
\newcommand{\QQ}{\mathbb{Q}}
\newcommand{\PP}{\mathbb{P}}
\newcommand{\C}{\mathbb{C}}
\newcommand{\Proj}{\mathbf{P}}
\newcommand{\Jac}{\mathrm{Jac}}
\begin{document}
\title{Genus 3  curves whose Jacobians have endomorphisms by  $\QQ (\zeta _7 +\bar{\zeta}_7 )$ }

\author{J. William Hoffman}

\address{Department of Mathematics \\
              Louisiana State University \\
              Baton Rouge, Louisiana 70803}

 \author{Zhibin Liang}
\address{School of Mathematical Sciences \\
Capital Normal  University \\
Beijing, China, 100048\\
and Beijing International Center for Mathematical Research,\\
 Peking University
}

 \author{Yukiko Sakai}
\address{Department of Mathematics,
College of Liberal Arts and Sciences,
Kitasato University,
Kanagawa 252-0373
JAPAN}

 \author{Haohao Wang}
\address{Department of Mathematics \\
Southeast Missouri State University \\
Cape Girardeau, MO 63701}

\email{hoffman@math.lsu.edu,liangzhb@gmail.com,  y-sakai@kitasato-u.ac.jp,  hwang@semo.edu}

\subjclass[2000]{Primary: 11G10, 11G15, 14H45} \keywords{curves of
genus three, real multiplication, abelian variety}

\begin{abstract}
In this work we consider constructions of genus three curves $X$
such that $\mr{End}(\mr{Jac} (X))\otimes \QQ$ contains the totally real
cubic number field $\QQ (\zeta _7 +\bar{\zeta}_7 )$. We construct explicit two-dimensional 
families defined over $\QQ (s, t)$ whose generic member is a nonhyperelliptic genus 3 curve with this 
property. The case  when $X$ is hyperelliptic was
studied in \cite{HW}. We calculate the zeta function of one of these curves. Conjecturally this zeta function
is described by a modular form.
\end{abstract}

\maketitle

\section{Introduction}
\label{S:intro}

Let $\mfr{M}_g$ be the (coarse) moduli space of projective smooth
curves of genus $g$. Recently there has been a lot of progress in
understanding the loci in $\mfr{M}_g$ defined by those curves $X$
such that the Jacobian variety $\mr{Jac}(X)$ has special
endomorphisms,  when $g=2$ (see, e.g., \cite{EK}, \cite{dG}, \cite{bR}, \cite{yS}).
The situation for $g\ge 3$ is much less studied.
 Recall that for any
polarized abelian variety $A$, $\mr{End}(A)\otimes \QQ$ is a
semisimple algebra of finite dimension with involution, and the
explicit types possible can be listed (see \cite[\S X]{aW} and
\cite{dM}). For a generic curve $X$, we have $\mr{End}(\Jac
(X))\otimes \QQ = \QQ$ so special endomorphisms means:
$\mr{End}(\Jac  (X))\otimes \QQ $ contains, but is larger than
$\QQ$. Throughout this paper we work with varieties over fields of
characteristic $0$.

Let $\mfr{A}_3$ be the moduli space of principally polarized abelian varieties of dimension 3. 
The map $X \mapsto \mr{Jac}(X): \mfr{M}_3 \to \mfr{A}_3$ is a birational equivalence
of 6-dimensional varieties. However not every genus 3  curve is hyperelliptic. The general
genus 3 curve is isomorphic via the canonical embedding to a smooth projective plane quartic.
The locus of hyperelliptic curves $\mfr{M}_3 ^{\mr{hyper}}\subset \mfr{M}_3$ is a 5-dimensional
irreducible subvariety. A hyperelliptic genus 3 curve can be given by an equation
$y^2 = f(x)$ where the polynomial $f(x)$ is of degree 7 or 8 and has distinct roots.
In \cite{HW} we showed via the method of Humbert and Mestre, that given a Poncelet 7-gon, there is a genus 3 hyperelliptic
curve $X$ canonically constructed from it, which had the property that
$\mr{End}(\mr{Jac}(X))\otimes \QQ\supset \QQ (\zeta _7 + \overline{\zeta}_7):=
 \QQ (\zeta _7 ^+ )$,
where $\zeta _7$ is a primitive 7th root of unity.

In this paper we give a construction of nonhyperelliptic curves
of  genus 3 with this property via a generalization of the method of Humbert and Mestre.
 As Mestre pointed out to us on  visit to 
Beijing (summer 2012) our method can be viewed as a special case of results of Ellenberg
\cite{JE}. Actually Ellenberg defines coverings of Riemann surfaces which define algebraic curves
with special endomorphism algebras of their Jacobians, but his method cannot provide explicit equations 
for those curves because of the nonalgorithmic nature of Riemann's Existence Theorem.

We construct explicit 2 - dimensional families of nonhyperelliptic curves, defined over 
the field $\QQ (s, t)$. As an application, we calculate the zeta function of one of these curves. We find that 
the numerators of this zeta factor into 3 quadratic polynomials over the field $\QQ (\zeta _7 + \overline{\zeta}_7)$
(at primes of good reduction),  as is expected from the existence of the extra endomorphisms in the Jacobian.  Conjecturally these quadratic factors define the $L$-function of a modular form.



Let $K/\QQ$ be a totally real number field of degree 3. The moduli
space principally polarized abelian varieties $A$ such that
$\mr{End}(A)$ contains a fixed order $R \subset K$ is a Hilbert
modular variety $H_R$ of dimension 3 (for the general theory, see \cite{eG},  \cite{gS}, \cite{gS2}).

As mentioned above, we can
(generically) identify $A = \mr{Jac}(X)$ for a genus 3 curve $X$.
We obtain in this way a morphism $H_R \to  \mfr{M}_3 $. The construction in this 
paper gives explicit families of nonhyperelliptic curves of genus 3 whose Jacobians have 
endomorphisms by $\QQ (\zeta _7 ^+)$. These families have 2 independent moduli and therefore  
do not get the generic member of $H_R$. Nonetheless ours is an explicit algorithm to construct 
families of curves which may be useful in other contexts.  A sequel to this paper will discuss another approach 
which does give families with 3 independent moduli.Ê

The outline of this paper is as follows: In section \ref{S:mconstruction} we review Mestre's method, and give a generalization of it. 
We relate this in section \ref{S:econstruction} to Ellenberg's approach, and in particular show that the Jacobians of these curves 
have a endomorphisms by  $\QQ (\zeta _7 ^+)$. The last section \ref{S:example} looks at an example. We compute the Euler 
factors of the zeta function for some values of the rational prime $p$ of good reduction, and experimentally at least, verify their expected behavior, 
namely the factorization of their numerators into quadratic pieces mentioned above.

Many of the calculations in this paper were carried out with
Mathematica (\cite{Math}), Magma (\cite{Magma}), PARI/GP (\cite{pari}) and Sage (\cite{Sage}).

\noindent {\bf Acknowledgements:}
{We would like to thank Dr. J. F. Mestre for his helpful comments during his visit to Beijing in summer 2012.  This work was
conducted during an invited academic visit to Beijing International Center for Mathematical Research and the Chinese Academy
of Science, and we would like to thank the hosting institutions, in particular Zhibin Liang and Lihong Zhi, for the invitations, and the hospitality during the visit.  The first author is supported in part by NSA grant 115-60-5012 and NSF grant OISE-1318015; the second author is supported by NSFC11001183 and NSFC1171231, and the last author would like to thank the GRFC grant from Southeast Missouri State University.}

\section{Generalization of Mestre's construction}
\label{S:mconstruction} 
We will first briefly review the
construction of Mestre; for details see his papers \cite{jfM} and
\cite{jfM2}.

Let $E$ be an elliptic curve and $G \subset E$ a finite subgroup.
Let $x : E \to \Proj ^1$ be a rational function such that $x(-P) =
x(P)$, for instance the $x$-coordinate in a  Weierstrass model of
$E$.  We also assume that $x(O) = \infty$. If $p: E \to E/G$, then there is a rational function $u : \Proj ^1
\to \Proj ^1$ making the diagram commute:
\[
\begin{CD}
E @>p>> E/G = E'\\
@VxVV @VzVV \\
\Proj ^1 @>u>> \Proj^1
\end{CD}
\] 

These curves have correspondences induced by the elements of $G$. For later purposes, 
we will label the left-hand projective line as $\Proj ^1 _x$, the right-hand projective line 
as $\Proj ^1 _z$ and the projection $z: E' \to \Proj ^1 _z$. 

Any $x\in \Proj^1 _x$ is of the form $x=x(P)$ for a $P\in E$ unique up
to $\pm P$. Because of $z\circ p = u\circ x$ we have $u(x(P+M)) =
u(x(P))$ for any $M\in G$. Therefore the map $X_s \to \mr{Sym}^2
(X_s)$
\[
(x, y)= (x(P), y)\mapsto (x(P+M), y)+(x(P-M), y)
\]
is well-defined, and one can show that it is  a morphism of
varieties.

We define a family of hyperelliptic
curves by
\[
X_s: y^2 = u(x) - s, \, \, \text{where $s$ is a parameter}.
\]
The special case of interest for us is when $G= \langle S \rangle
$ is the subgroup generated by a point of order $7$.  We have the
universal family  $(E, S)$ of elliptic curves with a point of
order $7$ parametrized by the modular curve $X_1(7) \cong \Proj^1 _t$,
given by the equation:
\[
E_t: y^2 + (1 + t - t^2) xy + (t^2 - t^3) y = x^3 + (t^2 - t^3)
x^2, \ \ S = (0, 0).
\]
The subgroup scheme $G = \langle S \rangle$ is defined by the
equation $x(x-t^3+t^2)(x-t^2+t)=0$. The quotient $E/G$ is
\begin{align*}
& y^2 + (-t^2 + t + 1)xy + (-t^3 + t^2)y =
x^3 + (-t^3 + t^2)x^2 + a(t)x + b(t),\\
& a(t) = -5 (-1 + t) t (1 - t + t^2) (1 - 5 t + 2 t^2 + t^3),\\
&b(t) = -(-1 + t) t (1 - 18 t + 76 t^2 - 182 t^3 + 211 t^4 - 132
t^5 +
   70 t^6 - 37 t^7 + 9 t^8 + t^9).
\end{align*}
The map $E\to E/G$ is given by $(x, y) \to (u(x), v(x, y))$ where
\begin{align*}
u(x)= u_t(x) &=  \frac{w(x)}{(-t + t^2 - x)^2 (-t^2 + t^3 - x)^2 x^2} , \\
 w(x)=w_t(x)&=  x^7   -2 (-1 + t) t (1 + t)
   x^6+ (-1 + t) t (1 - 7 t + 5 t^2 - 3 t^3 + 2 t^4 + t^5)x^5 \\
  &  -(-1 + t)^2 t^3 (-1 - 13 t + 12 t^2 - 9 t^3 + 6 t^4) x^4
   +(-1 + t)^3 t^4 (-1 - 7 t - 8 t^2 + 4 t^3 + t^4 + t^5) x^3
  \\
  &  -(-1 + t)^4 t^6 (1 + t) (-3 - 5 t + 3 t^2) x^2
  +(-1 + t)^5 t^8 (-3 - 3 t + t^2) x+(-1 + t)^6 t^{10} .
\end{align*}
Note that the denominator above is
$[(x-x(S))(x-x(2S))(x-x(3S))]^2$. We obtain in this way a 2-parameter family of genus 3 
hyperelliptic curves, which are studied in detail in \cite{HW}.


These constructions of Mestre were applied to the hyperelliptic case. In fact, the method is more general.
If $X: f(u, y) = 0$ is any algebraic curve ($f$ is a rational function), then the curve $f(u(x), y)=0$ will
have correspondences induced from elements of the subgroup $G \subset E$. The essential point is the
property $u(x(P+S))= u(x(P))$ for any point $P\in E$ and $S\in G$, which follows immediately from the
definition of $u(x)$.  Hence if $(x(P), y)$ is on $X$ so is $(x(P+S), y)$ . The correspondence is
\[
(x, y) = (x(P), y)\mapsto (x(P+S), y)+(x(P-S), y).
\]
This is well-defined: any $(x, y)$ on $X$ is of the form $(x(P), y)$ for some $P \in E$, unique
up to $\pm P$. Replacing $P$ by $-P$ in the above expression gives the same result. 

If $f(y) = y^2 -t$ for a parameter $t$, Mestre's family is $f(y) - u(x) = 0$. That the genus of a curve 
$X_t$ in this family is $3$
can be seen from the ramification properties of the maps
\[
y\mapsto z = f(y) = y^2-t : \Proj ^1 _y\rightarrow \Proj ^1 _z ,\quad
x\mapsto z = u(x): \Proj ^1 _x \rightarrow \Proj ^1 _z,
\]
see below.

For nonhyperelliptic case, we would like to find an equation $f(y)=u(x)$, where the function $f(y)$ is chosen such
that the curve $f(y)=u(x)$ is of genus 3. The key is in the ramification properties of the map $u$. The morphism 
$u$ has degree 7. Since both source and target have genus 0, the genus 
formula tells us that the total ramification of the map 
$$z = u(x): \Proj ^1 _x \to \Proj ^1 _z$$
is 12. We can identify the ramification points easily.  Let $Q_1 = O, Q_i, i= 2, 3, 4$ be the 
points of order 2 on $E$, and define $q_i  = x(Q_i)$, so $q_1 = \infty$. Since $x(Q+ j S) = x(Q-jS)$
for any multiple of $S$, a generating point of order 7, and any point $Q$ of order 2, we let
$\{ r_i, r_i', r_i '' \}$ be the values $\{x(Q+ j S)    \}$ for $j \neq 0$ mod $7$. Because 
7 is prime to 2, $Q_i ' = f(Q_i)$ are the points of order 2 on $E' = E/\langle P\rangle$. Finally remember that 
the ramification of the maps $x: E \to  \Proj ^1 _x $ and  $z: E' \to  \Proj ^1 _z $ occur only at the points
of order 2, and that the map $f$ is unramified. 

\begin{lemma}
\label{L:ram}
 The map $z = u(x): \Proj ^1 _x \to \Proj ^1 _z$ is ramified of type 2,2,2,1
above each of the four points $q_i, ' = x(Q_i ')$. In fact, $u$ is \'etale at $q_i = x(Q_i)$
and branched of order 2 at each of $r_i, r_i' , r_i''$, as in the picture below.

\[
\begin{tikzpicture}

\draw[brown] (0,5.9) --(3,4.9);
\draw[brown] (0,4.9) --(3,5.9);
 \draw [red, thick] (1.5,5.4) circle [radius=0.05];
 \node (P2) at (1.5, 0.1+5) {$r_i $};

\draw[brown] (0,4.7) --(3,3.7);
\draw[brown] (0,3.7) --(3,4.7);
 \draw [red, thick] (1.5,4.2) circle [radius=0.05];
 \node (P2) at (1.5, -0.1+4) {$r_i' $};
 
 \draw[brown] (0,3.5) --(3,2.5);
\draw[brown] (0,2.5) --(3,3.5);
 \draw [red, thick] (1.5,3) circle [radius=0.05];
 \node (P2) at (1.5, -0.3+3) {$r_i $};
 
\draw[brown] (0,2) --(3, 2);
 \draw [red, thick] (1.5,2) circle [radius=0.05];
 \node (P1) at (1.5, -0.3+2) {$q_i $};

\draw[blue] (0,0) --(3, 0);
 \draw [red, thick] (1.5,0) circle [radius=0.05];
 \node (P0) at (1.5, -.3) {$q_i '$};
 \node (R0) at  (1.5, .2) {};
 
 \path[commutative diagrams/.cd, every arrow, every label]
 (P1) edge node {$u$} (R0);
\end{tikzpicture}
\]
\end{lemma}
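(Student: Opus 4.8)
The plan is to compute the ramification of $u$ by exploiting the commutative square $u\circ x = z\circ p$ together with the multiplicativity of ramification indices in a composition. Since $p$ is an isogeny of degree $7$ in characteristic $0$, it is étale, so $e_p(P)=1$ for every $P\in E$. Applying $e_{h\circ g}(\cdot)=e_h(g(\cdot))\,e_g(\cdot)$ to the two factorizations of the degree-$14$ map $E\to \Proj^1_z$ at a point $P$, writing $\xi=x(P)$, yields the key identity
\[
e_u(\xi)\,e_x(P)=e_z(p(P))\,e_p(P)=e_z(p(P)).
\]
Because $x$ and $z$ are the degree-$2$ quotients by $[-1]$, each ramifies with index $2$ exactly at the four $2$-torsion points and is étale elsewhere. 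Thus the right-hand side equals $2$ precisely when $p(P)$ is $2$-torsion, i.e. when $\eta=u(\xi)$ is one of the four branch values $q_i'=z(Q_i')$; this is what concentrates all ramification of $u$ over the $q_i'$.

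Next I would identify the fibre $u^{-1}(q_i')$ by a group-theoretic argument. Since $\gcd(7,2)=1$, the kernel $G=\langle S\rangle$ meets $E[2]$ trivially, so $p$ restricts to an isomorphism $E[2]\xrightarrow{\sim}E'[2]$; in particular $Q_i'=p(Q_i)$. A point $\xi=x(P)$ lies in $u^{-1}(q_i')$ iff $z(p(P))=z(Q_i')$, iff $p(P)\in z^{-1}(q_i')=\{Q_i'\}$, iff $P\in p^{-1}(Q_i')=Q_i+G=\{Q_i+jS:0\le j\le 6\}$. Applying $x$ to these seven points and using $x(Q_i+jS)=x(-(Q_i+jS))=x(Q_i-jS)$ (recall $2Q_i=O$), the indices $j$ and $7-j$ give the same value. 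Hence $j=0$ yields the single value $q_i=x(Q_i)$, while the pairs $\{1,6\},\{2,5\},\{3,4\}$ yield the three values $r_i,r_i',r_i''$, so $u^{-1}(q_i')=\{q_i,r_i,r_i',r_i''\}$.

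Finally I would read off the indices from the key identity. At $\xi=q_i$ the unique preimage is $P=Q_i\in E[2]$, so $e_x(P)=2$ and $e_z(Q_i')=2$ force $e_u(q_i)=1$; thus $u$ is étale at $q_i$. At $\xi=r_i$ (say $x(Q_i+S)$) a preimage is $P=Q_i+S$, which has order $14$ and so is \emph{not} $2$-torsion; hence $e_x(P)=1$ while $e_z(Q_i')=2$, giving $e_u(r_i)=2$, and likewise for $r_i',r_i''$. This produces the ramification type $(2,2,2,1)$ over each $q_i'$, consistent with $\deg u=7=2+2+2+1$; summing the local contributions gives $4\times 3=12$, which matches the total ramification forced by Riemann--Hurwitz and confirms that $u$ is unramified over every point other than the four $q_i'$.

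The one genuinely delicate point---where genericity enters---is verifying that $r_i,r_i',r_i''$ are three \emph{distinct} points, and distinct from $q_i$; this is what makes the type exactly $(2,2,2,1)$ rather than a degeneration with fewer points of higher multiplicity. I expect this to be the main obstacle, and would dispatch it either from the explicit formula for $u_t(x)$ given above (checking that its numerator $w_t(x)$ and the denominator have no unexpected common factor for generic $t$) or by a specialization argument showing the three values cannot collide on a dense open locus of the $t$-line.
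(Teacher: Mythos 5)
Your proof is correct and takes essentially the same route as the paper's: both analyze the composite $u\circ x = z\circ p$, using that the isogeny $p$ is \'etale and that $x$ and $z$ ramify with index $2$ exactly at the $2$-torsion points, to force $e_u(q_i)=1$ and $e_u(r_i)=e_u(r_i')=e_u(r_i'')=2$ (the paper leaves the multiplicativity identity and the fibre identification implicit, which you spell out). The one point you flag as delicate is in fact immediate from the computation you already made: since $x(P)=x(P')$ if and only if $P'=\pm P$, the values $x(Q_i+jS)$ coincide only for $j\equiv\pm k \pmod 7$, so $q_i, r_i, r_i', r_i''$ are automatically four distinct points whenever $S$ has exact order $7$ --- no genericity assumption or appeal to the explicit formula for $u_t$ is needed.
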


\begin{proof}
Look at the ramification of the composite map $u\circ x = z\circ f$. Since 
$f$ is everywhere unramified, the branching of this function can only occur at
$f^{-1} (\text{branch points of } z) = f^{-1} (Q_i ') = \{  Q_i + j S) \}$ and these are branch points of order 2.
On the other hand, $Q_i$ is a branch point of order 2 for $x : E \to \Proj ^1 _x$, so there cannot be any further 
branching at this point under $u\circ x$, which means that $u$ must be unramified at $q_i = x(Q_i)$. 
But $x : E \to \Proj ^1 _x$ is unramified at $Q_i + j S$ if $j$ is not 0 mod 7 (these are not points of order 2), 
and since $u\circ x$ does ramify of order 2 at this point and $x$ does not, we must have that $u$ ramifies of order 2 at
each of $r_i, r_i' , r_i''$ as claimed.
\end{proof}

One can understand  Mestre's method as follows: he base-extends a double cover $a  : \Proj  ^1 _y \to \Proj ^1 _z$ 
in the shape $z = f(y) =  y^2 +c$ for an constant $c$, via the map $z = u(x): \Proj ^1 _x \to \Proj ^1 _z$, and normalizes
to obtain a smooth curve $X$. The point $S$ of order 7 on the elliptic curve $E$ gives rise to a correspondence 
which defines an endomorphism of the Jacobian of $X$, as outlined above. The fact that the genus of $X$ is 3
can be immediately seen from the explicit equations, but in fact, it can also be seen from the ramification
properties of the map $g: X \to \Proj ^1 _x $ which is the base-extension of $f  : \Proj  ^1 _y \to \Proj ^1 _z$  via 
$u$. The essential point is that $a$ has ramification index 2 above $q_1' = \infty$ and that $u$ has ramification
type $2,2,2,1$ over that point. Upon base-extension, $g$ ramifies over $q_1, r_1, r_1', r_1''$ respectively of type
$2, 1, 1, 1$.  The point is that, $u$ is unbranched at $q_1$, but branched of order 2 at $ r_1, r_1', r_1''$. Pulling back
along an \'etale map does not change the ramification, but pulling back a double branching along a double branching
``unscrews'' it, as in the picture below. 

\[
\begin{tikzpicture}

\draw[brown] (0,5.9) --(3,4.9);
\draw[brown] (0,4.9) --(3,5.9);
 \draw [red, thick] (1.5,5.4) circle [radius=0.05];
 \node (P2) at (1.5, 0.1+5) {$r_i $};

\draw[brown] (0,4.7) --(3,3.7);
\draw[brown] (0,3.7) --(3,4.7);
 \draw [red, thick] (1.5,4.2) circle [radius=0.05];
 \node (P2) at (1.5, -0.1+4) {$r_i' $};
 
 \draw[brown] (0,3.5) --(3,2.5);
\draw[brown] (0,2.5) --(3,3.5);
 \draw [red, thick] (1.5,3) circle [radius=0.05];
 \node (P2) at (1.5, -0.3+3) {$r_i $};
 
\draw[brown] (0,2) --(3, 2);
 \draw [red, thick] (1.5,2) circle [radius=0.05];
 \node (P1) at (1.5, -0.3+2) {$q_i $};

\draw[blue] (0,0) --(3, 0);
 \draw [red, thick] (1.5,0) circle [radius=0.05];
 \node (P0) at (1.5, -.3) {$q_1' = \infty$};
 \node (R0) at  (1.5, .2) {};
 
 \path[commutative diagrams/.cd, every arrow, every label]
 (P1) edge node {$u$} (R0);

 \draw[black] (8,5.7) --(11,5.7);
\draw[black] (8,5.1) --(11,5.1);
 \draw [red, thick] (9.5, 5.1) circle [radius=0.05];
  \draw [red, thick] (9.5, 5.7) circle [radius=0.05];

 \draw[black] (8,4.5) --(11,4.5);
\draw[black] (8,3.9) --(11,3.9);
 \draw [red, thick] (9.5, 3.9) circle [radius=0.05];
  \draw [red, thick] (9.5, 4.5) circle [radius=0.05];

\draw[black] (8,2.7) --(11,2.7);
\draw[black] (8,3.3) --(11,3.3);
 \draw [red, thick] (9.5, 2.7) circle [radius=0.05];
  \draw [red, thick] (9.5, 3.3) circle [radius=0.05];

\draw[black] (8,1.5) --(11, 2.5);
\draw[black] (8,2.5) --(11, 1.5);
 \draw [red, thick] (9.5, 2) circle [radius=0.05];

\draw[green] (8,-.5) --(11, .5);
\draw[green] (8,.5) --(11, -.5);
 \draw [red, thick] (9.5,0) circle [radius=0.05];
 \node (Q0) at (9.5, -.3) {$\infty$};
 
  \node (Q1) at (9.5, -0.3+2) {$q_i $};
   \node (Q2) at (9.5, .3) {};
   
 \node (S0) at  (4, 0) {};
  \node (T0) at  (7, 0) {};
  \path[commutative diagrams/.cd, every arrow, every label]
 (T0) edge node {$f$} (S0);

  \node (U0) at  (4, 3.85) {};
  \node (V0) at  (7, 3.85) {};
  \path[commutative diagrams/.cd, every arrow, every label]
 (V0) edge node {$g$} (U0);

  \path[commutative diagrams/.cd, every arrow, every label]
 (Q1) edge node {$v$} (Q2);
 
\end{tikzpicture}
\]

The general case is this:

\begin{lemma}
\label{L:disk}
Let $D_w$ be the open unit disk in the complex $w$-line. Consider the map
$f: D_t \to D_s$ given by $s=f(t) = t^n$ for a positive integer $n$. Let
$g: D_z \to D_s$ be given by $s=g(z) = z^d$ for a positive integer $d$.
Let $e$ be the greatest common divisor of $n$ and $d$. Then the normalization
of the base-change is a disjoint union of $e$ analytic disks $D_w$:
\[
  \widetilde { D_z \times _{D_{s}} D_t}= \coprod _{i = 1} ^{e}D_w.
\]
\end{lemma}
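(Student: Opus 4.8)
The plan is to work locally at the unique branch point of the base change --- the common preimage of $s=0$ --- and to split the argument into two independent parts: counting the connected components over the \emph{punctured} base, and then filling in the punctures by normalization.

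First I would observe that $D_z \times_{D_s} D_t$ is the analytic subvariety of the bidisk $D_t \times D_z$ cut out by the single equation $t^n = z^d$ (the two relations $s=t^n$, $s=z^d$ determine $s$ and force this), and that it is smooth over $D_s$ away from the origin; the only point needing resolution is $(t,z)=(0,0)$. Over the punctured disk $D_s^* = D_s \setminus \{0\}$, the maps $f$ and $g$ restrict to connected coverings of degrees $n$ and $d$, and $\pi_1(D_s^*) \cong \ZZ$ acts on the fibers $\ZZ/n$ and $\ZZ/d$ by translation by $1$. The restricted fiber product is therefore the covering whose fiber is $\ZZ/n \times \ZZ/d$, carrying the diagonal $\ZZ$-action $(a,b)\mapsto(a+1,b+1)$.

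The combinatorial heart of the argument is the orbit count for this diagonal action. Two pairs $(a,b)$ and $(a',b')$ lie in one orbit exactly when the simultaneous congruences $k\equiv a'-a \pmod n$ and $k\equiv b'-b \pmod d$ are solvable, which by the Chinese-remainder criterion happens iff $a'-b'\equiv a-b \pmod{e}$, where $e=\gcd(n,d)$. Hence the orbits are indexed by $(a-b)\bmod e$, giving exactly $e$ connected components, each an orbit of size $\mathrm{lcm}(n,d)=nd/e$. Each such component is thus a connected degree-$(nd/e)$ covering of $D_s^*$, hence isomorphic to a punctured disk mapping by $s=w^{nd/e}$. Passing to the normalization, I would note that as $s\to 0$ along any component both $t$ and $z$ tend to $0$, so each punctured-disk component has a single limit point at the origin; the analytic normalization adjoins exactly that one smooth point, turning each component into an honest analytic disk $D_w$, and yielding $\coprod_{i=1}^e D_w$ as claimed.

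The step I expect to be the main obstacle is justifying rigorously that the analytic normalization agrees with this topological completion --- that it adds a single reduced smooth point per branch rather than leaving a singularity. I would secure this through the algebraic incarnation: eliminating $s$ gives the local ring $\C\{t,z\}/(t^n-z^d)$, and writing $n=en'$, $d=ed'$ with $\gcd(n',d')=1$ factors $t^n - z^d = \prod_{j=0}^{e-1}(t^{n'} - \zeta^j z^{d'})$, with $\zeta = e^{2\pi i/e}$, into $e$ pairwise-distinct irreducible power-series factors. Each factor has coprime exponents, hence is a single smooth Puiseux branch parametrized by $w\mapsto (t,z)=(w^{d'},\,c\,w^{n'})$ for a suitable constant $c$, so its normalization is a disk; distinctness of the factors shows the branches are disjoint, recovering the $e$ disjoint disks and confirming the component count and covering degrees found above.
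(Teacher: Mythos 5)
Your proposal is correct, and its decisive second step is in substance the paper's own proof: the paper likewise writes $n = e n_0$, $d = e d_0$ with $\gcd(n_0,d_0)=1$, splits the locus $z^d = t^n$ into the $e$ branches $M_i : z^{d_0} = \eta^i t^{n_0}$ (with $\eta$ a primitive $e$th root of unity), and parametrizes each branch bijectively by $w \mapsto (\xi w^{n_0}, w^{d_0})$, $\xi^{d_0} = \eta^i$, verifying injectivity and surjectivity by hand using the coprimality of $n_0$ and $d_0$. What you add that the paper does not have is the preliminary covering-space computation over the punctured base $D_s^*$: identifying the restricted fiber product with the $\ZZ$-set $\ZZ/n \times \ZZ/d$ under the diagonal translation action and counting orbits via the Chinese-remainder solvability criterion. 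That argument independently yields both the number $e$ of components and the covering degree $\mathrm{lcm}(n,d) = nd/e$ of each, and it explains conceptually where the $\gcd$ comes from; the paper's direct route buys instead a completely explicit normalization map, which is what is actually used downstream in tracking ramification. Two phrasings in your writeup should be tightened, though neither is a gap since your factorization argument carries the load: each factor $t^{n'} - \zeta^j z^{d'}$ is irreducible and unibranch with disk normalization, but it is \emph{not} smooth at the origin unless $\min(n',d') = 1$, so ``smooth Puiseux branch'' should read ``irreducible branch whose normalization is a smooth disk''; and the branches are not disjoint --- they all pass through $(0,0)$ (the paper says ``mutually disjoint except for the one common solution $(0,0)$'') --- it is only the normalization that separates them, which is precisely why the lemma is stated for $\widetilde{D_z \times_{D_s} D_t}$ rather than for the fiber product itself.
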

\begin{proof}
The base-change is $\{ (z, t) \mid z^d = t^n \}$. Write $d=e d_0$, $n = e n_0$, with
$d_0$, $n_0$ relatively prime. Choose any primitive $e$th root of unity
$\eta$. This set breaks up into $e$ sets
\[
M_i = \{ (z, t) \mid z^{d_0} = \eta ^i t^{n_0} \}, \quad i = 0, ..., e-1.
\]
These are irreducible and mutually disjoint except for the one common solution $(0, 0)$. The normalization
will be the disjoint union of the normalization of each, and each of these is a disk. A one-to-one parametrization
$D_w\to M_i $ is given by $w \mapsto (\xi w^{n_0}, w^{d_0})$, where $\xi ^{d_0} = \eta ^i$. One checks that it is
a bijection. If $(\xi w^{n_0}, w^{d_0}) = (\xi w_1^{n_0}, w_1 ^{d_0})$, then $(w/w_1)^{d_0} = (w/w_1)^{n_0}$,
which shows that $w/w_1 = 1$ since $d_0$ and $n_0$ are relatively prime, which shows injectivity.
To see surjectivity, for a given $(z, t)$, let $w^{d_0}= t$. Since
\[
(\xi w^{n_0})^{d_0} = \eta ^i t^{n_0} = z^{d_0},
\]
we have $\xi w^{n_0} = \zeta z$, with $\zeta ^{d_0} = 1$. Let $a$ be an integer such that
$a n_0 \equiv -1 $ mod $d_0$, which exists since $d_0$ and $n_0$ are relatively prime.
Let $w_1 = \zeta ^a w$.  Then $w_1 ^{d_0} = t$, and
$z = \zeta ^{-1}\xi w^{n_0}= \zeta ^{-1}\xi \zeta ^{-an_0} w_1^{n_0}= \xi w_1 ^{n_0}$. 
\end{proof}

In summary: the projection   $g: X \to \Proj ^1 _x $ ramifies only once above the points 
$u^{-1} (\infty)$, as in the above picture. The other branch points occur at 
$u^{-1} (\text{the other branch point of } f)$, but $u$ is unbranched over these, so we get
7 branch points of order 2. The total is $e = 1+7=8$, so that the genus of $X$ is indeed 3.

We change the notation a bit by setting $q_1 ' = \infty,  q_2 ' = m, q_2' = n, q_3' = p$.
The ramification of the function $u(x)$ is of type $2,2,2,1$ over
the four points $m, n, p, \infty$ which are the images of the
points $x(Q_i)$ for the 4 points $Q_1, Q_2, Q_3, Q_4 \in E$ of
order 2 in the group law of $E$.  

We generalize Mestre's construction by pulling back a triple covering $f  : \Proj  ^1 _y \to \Proj ^1 _z$
along the map $u$. For essentially the same reason as in the hyperelliptic case, we obtain a 
correspondence on this curve $X$. We arrange so that the map $f$ has branching of type $2, 1$
over each of the points $m, n, p$. There is one further branch point of $f$ which is left arbitrary. Because 
of the branching behavior of the map $u$ above the points $m, n, p$ (see the picture in lemma \ref{L:ram})
we get by the reasoning of the previous section that the ramification of the map $g: X \to \Proj ^1 _x $
is of the following type: Above the points $u^{-1}(m),u^{-1}(n),u^{-1}(p) $ only one ramification point in each set. 
Above each of the seven points in $u^{-1} (\text{the other branch point of } f )$, a single double branch point. 
The total ramification is thus $10$ and since $g$ is a triple cover, this gives a genus of 3 for $X$. 
We will show in the next section that the correspondence gives an endomorphism of the Jacobian of $X$
which is $\zeta _7 + \zeta _7 ^{-1}$ by connecting our construction with more general results of Ellenberg. 

We obtain a one parameter family $f_s(y)$ of coverings
of degree 3, branched above $m, n, p$. We symmetrize our projection with respect to 
permutations of $m, n, p$ to make our equations rational in the coefficients 
of the polynomial $(x-m)(x-n)(x-p) = x^3+ax^2 +bx +c$, which give the 
$x$-coordinates of the points of order 2 on the elliptic curve $E'$.

{\noindent \bf Procedure to Construct the Curves:}
\begin{itemize}
\item[1.]  Let $T=\dfrac{ay^3+by^2}{y+c}$ with $a \ne 0$.  This
gives a degree 3 covering $\PP^1 \to \PP^1$ with branching above
$T=0, \infty$ with $y= 0,  \infty$ being the branching
points.  Moreover, the ramification of the function is of type $2,
1$ over these points, i.e., $T(0) = T'(0) = 0, T(\infty) = T'(\infty) = \infty$.
 We can choose the constants so that $T(1)=1, T'(1) = 0$. First, we have 
 $c\neq -1$. Then these conditions lead to $a+b=c+1$ and
  $c=\frac{-2a-b}{3a+2b} =a+b-1$. The solution $b=-a$ is rejected because 
  it leads to $c=-1$. The other solution is $b=(1-3a )/2$, and we remove the 
  denominator by replacing $a \to 1+2a$. It leads to the expression 
 
$T=\dfrac{(1+2a)y^3+(-1-3a)y^2}{y-1-a}$ with $a \ne -1/2$. This $T$ carries 
$0, 1, \infty$ respectively to $0, 1, \infty$ with multiplicity 2.

\item[2.] Let $T=\frac{(S-m)(p-n)}{(S-p)(m-n)}$.  This function
carries $T=0, 1, \infty$ to $S=m, n, p$.  Similarly, let
$y=\frac{(z-m)(p-n)}{(z-p)(m-n)}$.  This function carries $y=0, 1,
\infty$ to $z=m, n, p$.  Substitute $T, y$ expression for $T(y)$,  and we obtain
\begin{eqnarray*}
S & = & \frac{\alpha(z)}{\beta(z)}, \, \, \text{ where } \\
\alpha(z) & = & (m^2 n^2 p + 2 a m^2 n^2 p - a m^2 n p^2 - m n^2
p^2 - a m n^2 p^2) \\
& & +( -  3 m^2 n p  - 3 a m^2 n p  - 3 a m n^2 p  + 3 m n p^2  +
 6 a m n p^2 ) z \\
 & &  + ( m^2 n - m n^2  + 2 m^2 p + 3 a m^2 p  +
  n^2 p  + 3 a n^2 p  - 2 m p^2  - 3 a m p^2  -
 n p^2  - 3 a n p^2 ) z^2 \\
 & & +( - m^2 - a m^2  + m n  +
 2 a m n - a n^2  - a m p  - n p  - a n p  +
 p^2  + 2 a p^2 )z^3 \\
 \beta(z) & = & (m^2 n^2 + 2 a m^2 n^2 - m^2 n p - a m^2 n p - a m n^2 p - a m^2 p^2 +
 m n p^2 + 2 a m n p^2 - n^2 p^2 - a n^2 p^2 ) \\
 & & + (- m^2 n  -
 3 a m^2 n  - 2 m n^2  - 3 a m n^2  + m^2 p  + 3 a m^2 p +
 2 n^2 p + 3 a n^2 p  - m p^2 z + n p^2 )z \\
 & &  + (3 m n  +
 6 a m n  - 3 a m p  - 3 n p  - 3 a n p ) z^2 \\
 & & + ( - m  -
 a m  - a n  + p  + 2 a p )z^3.
 \end{eqnarray*}
This $S$ carries 
$m, n, p$ respectively to $m, n, p$ with multiplicity 2.

\item[3.] We need to find the parameter $a$ so that $f$ is
symmetric with respect to $m, n, p$. We do this by making 
$a$ a function of $m, n, p$. To do so, we observe that
\begin{eqnarray*}
f(a, m, n, p, y) &=& f(b, n, m, p,y) \quad  \Rightarrow \quad
b=-1-a, \\
f(a, m, n, p,y) & = & f(b, p, n, m,y) \quad  \Rightarrow \quad
b=\frac{-a}{1+3a}, \\
f(a, m, n, p, y) & = & f(b, m, p, n, y) \quad  \Rightarrow \quad
b=-\frac {1+2a}{2+3a}, \\
f(a, m, n, p,y) & = & f(b, p, m, n,y) \quad  \Rightarrow \quad
b=\frac{-1-a}{2+3a}\\
f(a, m, n, p,y) & = & f(b, n, p, m,y) \quad  \Rightarrow \quad
b=-\frac{1+2a}{1+3a}.
\end{eqnarray*}
In other words, $a$ undergoes a group of linear fractional transformations as above
isomorphic to $S_3$ as we permute $m, n, p$. We can realize this group of linear fractional
transformations by a rational function of degree 1. The reader can check that  
\[
a(m, n, p, s)=\dfrac{(mn-2mp+np)+(m-2n+p)s}{3(m-n)(p-s)}
\]
undergoes the same transformations as above when the $m, n, p$ are permuted, e.g., 
\begin{eqnarray*}
a( n, m, p, s) &=&  -1-a (m, n, p, s) \\
a (n, p, m ,s) & = &
\frac{1+2a(m, n, p, s)}{1+3a(m, n, p, s)}, 
\end{eqnarray*}
and this is sufficient since these permutations generate $S_3$. Define
\[
f_s(y)  = f_s(m, n, p, y) = f(a(m,n,p,s), m,n,p, y).
\]
Then $f_s(m, n, p, y)$ is invariant under all permutations of $m, n, p $, and 
\begin{eqnarray*}
 f_s(y)& =& \dfrac{\alpha_s(y)}{\beta_s(y)}, \, \,   \text{ where }\\
\alpha_s(y) & =& (2 m^3 n^3 p - 2 m^3 n^2 p^2 - 2 m^2 n^3 p^2 + 2
m^3 n p^3 -  2 m^2 n^2 p^3 + 2 m n^3 p^3) \\
& & +( - m^3 n^2 p  - m^2 n^3 p  -  m^3 n p^2  + 6 m^2 n^2 p^2  -
m n^3 p^2 - m^2 n p^3  -  m n^2 p^3) s \\
& & ( - 3 m^3 n^2 p - 3 m^2 n^3 p - 3 m^3 n p^2  +
 18 m^2 n^2 p^2  - 3 m n^3 p^2  - 3 m^2 n p^3  - 3 m n^2 p^3 )y  \\
 & &  + ( 6 m^3 n p - 6 m^2 n^2 p  + 6 m n^3 p - 6 m^2 n p^2  -
 6 m n^2 p^2  + 6 m n p^3 ) s y \\
 & &  + ( 6 m^3 n p  - 6 m^2 n^2 p  +
 6 m n^3 p - 6 m^2 n p^2  - 6 m n^2 p^2  + 6 m n p^3  ) y^2 \\
 & &  + (-
 3 m^3 n + 6 m^2 n^2  - 3 m n^3  - 3 m^3 p -
 3 n^3 p + 6 m^2 p^2  + 6 n^2 p^2  - 3 m p^3  -
 3 n p^3 ) s y^2 \\
 & & +( - m^3 n  + 2 m^2 n^2  - m n^3 - m^3 p  -
 n^3 p  + 2 m^2 p^2  + 2 n^2 p^2  - m p^3  - n p^3 ) y^3 \\
 & &  +( 2 m^3  - 2 m^2 n - 2 m n^2  + 2 n^3 -
 2 m^2 p  + 6 m n p  - 2 n^2 p  - 2 m p^2  -
 2 n p^2  + 2 p^3 ) s y^3 \\
   \beta_s(y) & = & (2 m^3 n^3 - 2 m^3 n^2 p - 2 m^2 n^3 p - 2 m^3 n p^2 + 6 m^2 n^2 p^2 -
 2 m n^3 p^2 + 2 m^3 p^3 - 2 m^2 n p^3 - 2 m n^2 p^3 + 2 n^3 p^3 ) \\
 & &  +( -
 m^3 n^2 - m^2 n^3  + 2 m^3 n p  + 2 m n^3 p  - m^3 p^2  -
 n^3 p^2  - m^2 p^3  + 2 m n p^3  - n^2 p^3 ) s \\
 & & + (- 3 m^3 n^2  -
 3 m^2 n^3  + 6 m^3 n p  + 6 m n^3 p - 3 m^3 p^2 -
 3 n^3 p^2 - 3 m^2 p^3  + 6 m n p^3  - 3 n^2 p^3 ) y \\
 & &  + (
 6 m^2 n^2  - 6 m^2 n p  - 6 m n^2 p  + 6 m^2 p^2  -
 6 m n p^2  + 6 n^2 p^2) s y  \\
 & & + ( 6 m^2 n^2  - 6 m^2 n p  -
 6 m n^2 p  + 6 m^2 p^2  - 6 m n p^2  + 6 n^2 p^2 )y^2 \\
 & & +( -
 3 m^2 n  - 3 m n^2  - 3 m^2 p  + 18 m n p  -
 3 n^2 p  - 3 m p^2  - 3 n p^2 ) s y^2 \\
 & & +( - m^2 n -
 m n^2  - m^2 p  + 6 m n p  - n^2 p - m p^2  -
 n p^2 ) y^3 \\
 & &  + (2 m^2 - 2 m n  + 2 n^2  - 2 m p  -
 2 n p  + 2 p^2 )s y^3.
\end{eqnarray*}

\item[4.] In terms of the elementary symmetric functions of $m,n, p$ this is:
\begin{eqnarray*}
 f_s(y)& =& \dfrac{\alpha_s(y)}{\beta_s(y)}, \, \,   \text{ where }\\
\alpha_s(y) & =& y^3 \left(s \left(2 \sigma _1^3-8
   \sigma _2 \sigma _1+18 \sigma _3\right)-\sigma
   _2 \sigma _1^2-3 \sigma _3 \sigma _1+4 \sigma
   _2^2\right)   \\
& &+y^2 \left(s \left(-3 \sigma _2
   \sigma _1^2-9 \sigma _3 \sigma _1+12 \sigma
   _2^2\right)+6 \sigma _3 \sigma _1^2-18 \sigma
   _2 \sigma _3\right) \\  
 & &+y \left(s \left(6 \sigma
   _1^2 \sigma _3-18 \sigma _2 \sigma
   _3\right)+27 \sigma _3^2-3 \sigma _1 \sigma _2
   \sigma _3\right) \\
& &+s \left(9 \sigma _3^2-\sigma _1 \sigma _2 \sigma
   _3\right) -6 \sigma _1 \sigma _3^2+2
   \sigma _2^2 \sigma _3\\
 \\
   \beta_s(y) & = &y^3
   \left(s \left(2 \sigma _1^2-6 \sigma
   _2\right)-\sigma _1 \sigma _2+9 \sigma
   _3\right)\\
   & &+y^2 \left(s \left(27 \sigma _3-3
   \sigma _1 \sigma _2\right)+6 \sigma _2^2-18
   \sigma _1 \sigma _3\right)\\
   & & +y \left(s \left(6
   \sigma _2^2-18 \sigma _1 \sigma _3\right)+12
   \sigma _3 \sigma _1^2-3 \sigma _2^2 \sigma
   _1-9 \sigma _2 \sigma _3\right)\\
 & & +s \left(4 \sigma _3 \sigma _1^2-\sigma _2^2
   \sigma _1-3 \sigma _2 \sigma _3\right)+2 \sigma
   _2^3+18 \sigma _3^2-8 \sigma _1 \sigma _2
   \sigma _3
\end{eqnarray*}

\item[5.] We apply this to the universal family of elliptic curves with a point of order 7
considered in section 2. The $m, n, p$ are the $x$-coordinates of the points of order 2 other than $O$ on the 
quotient curve $E'$. In other words, they are the roots of the 
discriminant with respect to $y$ of the Weierstrass equation for $E'$ given  there. The symmetric 
functions $\sigma _1, \sigma_2, \sigma _3$ are up to signs the coefficients of this discriminant. We get
\begin{eqnarray*}
\sigma _1 &=&\frac{1}{4} (-1 - 2 t - 3 t^2 + 6 t^3 - t^4)\\
\sigma_2 & = &\frac{1}{4} \left(-20 t^7+142 t^5-284 t^4+280
   t^3-138 t^2+20 t\right)\\
\sigma_3 &=&   \frac{1}{4} \left(4 t^{11}+32 t^{10}-184 t^9+428
   t^8-808 t^7+1371 t^6-1570 t^5+1031 t^4-376
   t^3+76 t^2-4 t\right)
\end{eqnarray*}

\end{itemize}

Hence, we have a 2 dimensional family of curves $X_{s,t}$ defined
by $f_s(y)=u_t(x)$, that is $$\alpha_s(y) (-t + t^2 - x)^2 (-t^2 +
t^3 - x)^2 x^2 = w_t(x) \beta_s(y).$$
We will see in the next section that their Jacobians  have endomorphisms by 
the cubic field $\QQ (\zeta _7 + \overline{\zeta}_7)$.

\section{Relation to Ellenberg's construction}
\label{S:econstruction}
Let $D_7 = \langle \sigma, \tau \mid \sigma ^7 = \tau ^2 = e, \tau \sigma \tau ^{-1} = \sigma ^{-1} \rangle $ be the dihedral group
of order 14, symmetries of a regular $7$-gon. It has $7$ involutions $\tau _i = \sigma ^i \tau \sigma ^{-i}, i =0, ..., 6$.  Corresponding to 
the lattice of subgroups of $D_7$ we have the tower of curves and morphisms as in the following diagram.
\[
\begin{tikzpicture}
  \node (P0) at (90:2.8cm) {$\{ e\}$};
  \node (P1) at (90+50: 2cm) {$\langle \tau _i  \rangle$} ;
  \node (P2) at (90+ 180: 2.5cm) {\makebox[5ex][r]{$D_7 $}};
  \node (P3) at (90+3*72:1.5cm) {\makebox[5ex][l]{$ \langle  \sigma \rangle$}};
  
  \path[commutative diagrams/.cd, every arrow, every label]
    (P0) edge node[swap] {} (P1)
    (P1) edge node[swap] {} (P2)
    (P3) edge node{} (P2);
    \path[commutative diagrams/.cd, every arrow, every label]
     (P0) edge node {} (P3);
   \end{tikzpicture}\quad\quad
\begin{tikzpicture}
  \node (P0) at (90:2.8cm) {$E$};
  \node (P1) at (90+50: 2cm) {$\Proj  _{x}^{1}$} ;
  \node (P2) at (90+ 180: 2.5cm) {\makebox[5ex][r]{$\Proj  _{z}^{1} $}};
  \node (P3) at (90+3*72:1.5cm) {\makebox[5ex][l]{$E' = E/\langle S \rangle$}};
  
  \path[commutative diagrams/.cd, every arrow, every label]
    (P0) edge node[swap] {$x$} (P1)
    (P1) edge node[swap] {$u(x)$} (P2)
    (P3) edge node{$z$} (P2);
    \path[commutative diagrams/.cd, every arrow, every label]
     (P0) edge node {$p$} (P3);
   \end{tikzpicture}   \quad\quad
  \begin{tikzpicture}
  \node (P0) at (90:2.8cm) {$Y$};
  \node (P1) at (90+50: 2cm) {$X_i$} ;
  \node (P2) at (90+ 180: 2.5cm) {\makebox[5ex][r]{$\Proj  _{y}^{1} $}};
  \node (P3) at (90+3*72:1.5cm) {$C$};
  
  \path[commutative diagrams/.cd, every arrow, every label]
    (P0) edge node[swap] {$h_i$} (P1)
    (P1) edge node[swap] {$v_i$} (P2)
    (P3) edge node{$y$} (P2);
    \path[commutative diagrams/.cd, every arrow, every label]
     (P0) edge node {$q$} (P3);
   \end{tikzpicture} 
   \]
   Here $E$ is an elliptic curve and $S\in E$ is a point of order $7$ as in Mestre's constrtuction. $f$ is the 
   canonical projection, which is an \'etale cyclic covering of degree $7$.  The curves $E/\langle  \tau _i \rangle $
   are all abstractly $\Proj ^1$ and we choose one coordinate $x$ for all of them. We do this as follows: 
   We let $x=x_1 : E \to \Proj ^1$ be projection modulo the involution $\tau _1 = \tau:P \mapsto -P$, then $x_i$ is the projection
   modulo the involution $\tau _i : P \mapsto -P + i S$. The diagram on the right results by base-change along the map
   $z \mapsto  u = z^2 +c: \Proj _z ^1  \to \Proj _u ^1$ for a constant $c$. Strictly speaking this is base-change followed by  
normalization. However, the map $q$ really is the base-change of $p$ via the 
map $C \to E'$, since $p$ is \'etale, and no normalization is  required. It follows that $q$ is an \'etale cyclic covering of degree 
$7$. We can see that the curves $X_i = Y/\langle \tau _i  \rangle$, all isomorphic, are of genus 3, the curve $C$ is of genus 2 and the curve $Y$ is of genus 8.   

Here is a diagram:

\[   
  \begin{tikzcd}  
& Y \arrow[dotted]{dl}{h}  \arrow{rr}{l}\arrow[near end]{dd}{q} & & E \arrow{dl}{x}\arrow{dd}{p} \\
X \arrow[crossing over]{rr}{\phantom{hhhh}g}\arrow{dd}{v} & & \Proj ^1 _x \\
& C \arrow{dl}{y}\arrow{rr}{\!\!\!\!\!\!\!\!\!\!\!\!\!  \!\!\!\!\!\!\!\!\!\!\!\!\!   k} & & E' \arrow{dl}{z} \\
\Proj ^1 _{y}\arrow{rr}{f} & & \Proj ^1 _{z}\arrow[crossing over, leftarrow, near end, swap] {uu}{ u } \\
\end{tikzcd}
 \]  
The front, bottom and back faces are defined as pull-hack squares. More precisely, they are pull-backs followed by normalization. 
Actually the back face is just a pull-back as mentioned, because the map $p$ is an \'etale map of smooth curves, so its base-change
gives an \'etale map of smooth curves $q$.  The dotted arrow $h$ exists commuting the diagram. In view of universal properties 
of normalization and fiber-products, it suffices to verify that $f\circ (y\circ q)= u\circ (x\circ l)$, which is immediate. Note also that 
this diagram clearly shows an action of $D_7$ on the curve $Y$. The maps $P \mapsto P+S$ on $E$ and the identity on $C$ lift 
to give an automorphism $\sigma$ of order 7  on $Y$ by base-change. The map $P \mapsto -P$ on $E'$ and the identity on 
$\Proj ^1 _y$ lifts to the hyperelliptic involution on $C$. In turn, the hyperelliptic involution on $C$ and the identity map on $E$
lifts to an automorphism $\tau$ of order 2. These generate a dihedral group of symmetries. The ramifications in this picture conform 
to Ellenberg's branching for this example: The overall projection $Y \to \Proj ^1_y$ is a Galois $D_7$ covering branched above 
6 points of branching type $2,2,2,2,2,2,2$. The six branch points $\Sigma \subset \Proj ^1 _y$ are the branch points of the 
projection of the genus 2 curve $C \to \Proj ^1 _y$. Thus the branched covering 
$a: Y(\C) \to \Proj ^1_y (\C)$
is topologically described by the monodromy homomorphism
\[
\rho : \pi _1 (\Proj ^1 _y (\C)\setminus \Sigma, *) \to \mr{Aut}(a^{-1} (*))\sim S_7
\] 
acting transitively. The image of $\rho$ is isomorphic to $D_7$. That the action is a transitive
Galois action  means that, as 
$D_7$-set, $a^{-1} (*)$ is isomorphic to $D_7$ with its natural $D_7$-action by (right) translation.
 Finally, this image is generated 
by the 6 involutions $\tau _i \in D_7$ gotten by walking from $*$ around each of the 6 points in $\Sigma$. These 
satisfy $\tau _1 \tau _2\tau _3\tau _4\tau _5\tau _6 = 1$. In terms of the generators $\sigma, \tau$ of $D_7$, 
this means that $\tau_i = \sigma ^{b_i} \tau$, $i = 1, ..., 6$.  It is not difficult to see that the action is transitive if and only if
at least two of the $b_i$ are distinct modulo 7. 

Recall the general set-up in Ellenberg's paper. We let $Y$ be a smooth projective curve 
an $G$ be a finite group acting on $Y$. There is a homomorphism of algebras 
\[
\ZZ [G]\rightarrow \text{End}(\text{Jac}(Y))
\]
and similar maps to endomorphism algebras of $H^i(Y)$ for various cohomology theories 
$H^i$. After $\otimes \QQ$, both sides above are semisimple algebras of finite dimension. 
If $H \subset G$ is a subgroup, we have the idempotent
\[
\frac{1}{\# H} \sum _{h \in H} h   :=\pi _H\in \QQ [G].
\]
One defines a ``Hecke algebra'' $\QQ [H \backslash G /H]$ as the subalgebra of
$\QQ [G]$ generated by $\pi _H g \pi _H$ for all $g \in G$. If $X = Y/H$, then we have a homomorphism 
\[
\QQ [H \backslash G /H] \rightarrow   \text{End}(\text{Jac}(X))
\]
since $\text{Jac}(X) = \pi _H \text{Jac}(Y)$. Similar results apply to endomorphisms of
$H^i(X)$.

Our case is the genus 8 curve $Y$ with the action of $G = D_7$ described above. The 
curve $X = Y/H$ has genus 3, where $H = \langle \tau \rangle$ is generated by any involution. One can show 
directly that $\QQ [H \backslash G /H] $ is isomorphic to $\QQ ([\zeta _7 + \overline{\zeta }_7]$, but 
in any case, this is one of a family of examples studied in Ellenberg's paper.  

In view of the classical isomorphism $\text{End}(\text{Jac}(X)) = \text{Corr}(X)$ of the 
ring of endomorphisms of the Jacobian with the ring of correspondence classes of $X$, for any curve 
$X$, we can describe geometrically the action of the generator $\pi _H \sigma \pi _H$ as follows:
if $h: Y\to X = Y/H$ is the canonical projection, we get a second projection 
$h \circ \sigma : Y \to X$. These two projections define the correspondence, which therefore
has degrees $(2,2)$. 

Thus we have shown our main result:

\begin{theorem}
\label{T:main} Let $X_{s, t}$ be the curve defined by $f_s(y) =
u_t(x) $. For general values of $(s, t)$ this is a genus 3 nonhyperelliptic curve
and there is an endomorphism $\phi$ of the Jacobian of $X_{s, t}$
which satisfies the equation $\phi^3+\phi^2-2\phi-1=0$. The
endomorphism is defined over the field $\QQ (s, t)$.
\end{theorem}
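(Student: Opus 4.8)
The plan is to assemble the three assertions of the theorem --- genus $3$, non-hyperellipticity, and the existence, minimal polynomial, and field of definition of $\phi$ --- from the ingredients already in place, the real work being the identification of the geometric correspondence with a Hecke operator and the computation of its eigenvalues.

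For the genus I would simply invoke the ramification bookkeeping preceding the statement. Lemma~\ref{L:ram} gives that $u_t$ is \'etale over each $q_i$ and doubly branched over each $r_i,r_i',r_i''$, while by construction $f_s$ is a triple cover branched of type $2,1$ over each of $m,n,p$ together with one further ordinary double branch point. Combining these local pictures through Lemma~\ref{L:disk} shows that $g:X_{s,t}\to\Proj^1_x$ has a single ramification point over each of $u_t^{-1}(m),u_t^{-1}(n),u_t^{-1}(p)$ and one double point over each of the seven points lying above the remaining branch point of $f_s$, for total ramification $10$. As $g$ has degree $3$, Riemann--Hurwitz gives $2g(X_{s,t})-2=3(-2)+10=4$, hence $g(X_{s,t})=3$. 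For non-hyperellipticity I would use semicontinuity: being non-hyperelliptic is a Zariski-open condition on $\mfr{M}_3$, so once it holds at a single value of $(s,t)$ it holds on a dense open subset of the parameter plane; the explicit curve of Section~\ref{S:example} supplies such a value, its canonical model being a smooth plane quartic (equivalently, it carries no $g^1_2$).

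For the endomorphism I would read everything off the Ellenberg diagram with $X_{s,t}=Y/H$, $H=\langle\tau\rangle$. The key identification is that the geometric correspondence
\[
(x,y)=(x(P),y)\longmapsto (x(P+S),y)+(x(P-S),y)
\]
is, as a class in $\mr{End}(\mr{Jac}(X_{s,t}))=\mr{Corr}(X_{s,t})$, the operator $(h\circ\sigma)_*h^*$: lifting $\bar x$ to the $H$-orbit $\{\tilde x,\tau\tilde x\}$ one has $h^*\bar x=\tilde x+\tau\tilde x$, and, using $\sigma\tau=\tau\sigma^{-1}$ and $h\circ\tau=h$, $(h\circ\sigma)_*(\tilde x+\tau\tilde x)=\overline{\sigma\tilde x}+\overline{\sigma^{-1}\tilde x}$ (bar denoting image in $X_{s,t}$), which is exactly the displayed map. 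Next I would transport this to the invariants: since $h^*h_*=e+\tau=2\pi_H$ on $H^1(Y)$ and $h^*$ identifies $H^1(X_{s,t})$ with $\pi_HH^1(Y)$, the operator $(h\circ\sigma)_*h^*$ becomes $2\pi_H\sigma\pi_H$ acting on $\pi_HH^1(Y)$. Finally, from $\QQ[D_7]\cong\QQ\times\QQ\times M_2(\QQ(\zeta_7^+))$ I would note that neither the trivial character (it does not occur in $H^1(Y)$, as $H^1(Y)^{D_7}=H^1(\Proj^1_y)=0$) nor the sign character (annihilated by $\pi_H$, since $\tau$ acts by $-1$) contributes to $\pi_HH^1(Y)$, so only the three conjugate $2$-dimensional representations $\rho_j$ survive; computing in the basis $\rho_j(\sigma)=\mathrm{diag}(\zeta_7^{j},\zeta_7^{-j})$, $\rho_j(\tau)=\left(\begin{smallmatrix}0&1\\1&0\end{smallmatrix}\right)$ gives $2\pi_H\rho_j(\sigma)\pi_H=(\zeta_7^{j}+\zeta_7^{-j})\pi_H$. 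Thus $\phi$ acts as the scalar $\zeta_7^{j}+\zeta_7^{-j}$ on each isotypic component, $j=1,2,3$; these are precisely the three roots of $x^3+x^2-2x-1$, the minimal polynomial of $2\cos(2\pi/7)$, and hence $\phi^3+\phi^2-2\phi-1=0$.

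The field of definition is then immediate: the curve $E_t$, the $7$-torsion point $S=(0,0)$, the function $u_t$, the curve $X_{s,t}$, and the translations $P\mapsto P\pm S$ are all defined over $\QQ(s,t)$, so the graph of the correspondence is cut out by equations with coefficients in $\QQ(s,t)$, whence $\phi$ is defined over $\QQ(s,t)$. The step I expect to be the real obstacle is the normalization in the previous paragraph: one must make sure the correspondence realizes $\zeta_7+\zeta_7^{-1}$ itself, satisfying $x^3+x^2-2x-1$, rather than $\tfrac12(\zeta_7+\zeta_7^{-1})$, which would satisfy $8x^3+4x^2-4x-1$. This is exactly the factor-of-two bookkeeping between the idempotent $\pi_H$ and the relation $h^*h_*=2\pi_H$, and it is what pins down the precise cubic asserted in the statement.
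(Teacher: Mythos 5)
Your proposal is correct, and its skeleton is the paper's: Riemann--Hurwitz bookkeeping via Lemmas~\ref{L:ram} and \ref{L:disk} for the genus (total ramification $3+7=10$ for the triple cover $g$, hence genus $3$), and the $D_7$-cover $Y$ with $X_{s,t}=Y/\langle\tau\rangle$ and the degree-$(2,2)$ correspondence $(h,\,h\circ\sigma)$ for the endomorphism. Where you genuinely diverge is at the decisive step: the paper stops at the remark that $\QQ[H\backslash G/H]\cong\QQ(\zeta_7+\overline{\zeta}_7)$ ``can be shown directly'' and otherwise outsources the endomorphism computation to Ellenberg \cite{JE}, whereas you make the argument self-contained --- you verify via $\sigma\tau=\tau\sigma^{-1}$ and $h\circ\tau=h$ that the geometric correspondence $(x(P),y)\mapsto(x(P+S),y)+(x(P-S),y)$ is the operator $(h\circ\sigma)_*h^*$, decompose $H^1(Y)$ under $D_7$ (the trivial representation is absent since $Y/D_7=\Proj^1_y$, the sign representation is killed by $\pi_H$), and compute $2\pi_H\rho_j(\sigma)\pi_H=(\zeta_7^{\,j}+\zeta_7^{-j})\pi_H$ in the three two-dimensional representations. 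Two of your additions repair points the paper elides. First, the factor-of-two normalization: the paper identifies the correspondence with $\pi_H\sigma\pi_H$, but as an operator on $\pi_H H^1(Y)$ the correspondence is $2\pi_H\sigma\pi_H$ (via $h^*h_*=e+\tau=2\pi_H$), and it is exactly this factor that makes $\phi$ satisfy $\phi^3+\phi^2-2\phi-1=0$ rather than $8\phi^3+4\phi^2-4\phi-1=0$; since the paper never computes eigenvalues, only your version actually pins down the stated cubic. Second, non-hyperellipticity, which the paper asserts in the theorem without argument: your semicontinuity argument anchored at the explicit smooth plane quartic of Section~\ref{S:example} is legitimate and non-circular, as that computation is independent of the theorem. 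What the paper's citation buys is generality --- Ellenberg's framework treats the whole family of dihedral examples and identifies the Hecke algebra structurally; what your computation buys is an explicit, checkable eigenvalue argument, with the field of definition $\QQ(s,t)$ read off directly from the rationality of $E_t$, $S=(0,0)$, $u_t$, and $f_s$.
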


\section{An example.}
\label{S:example}
Let $X$ be the smooth projective model of the curve $X_{s, t}$ with $s=0, t=-2$. The calculations that follow were done in Magma
and PARI/GP in a
Sage worksheet. This curve is irreducible and nonhyperelliptic of genus 3. The canonical model of this curve is the projective plane 
quartic:
\begin{align*}
&x^4+\frac{345 x^3 y}{4}-\frac{16038 x^3 z}{7}+\frac{14499 x^2 y^2}{14}-\frac{553623}{4} x^2 y
   z+\frac{4273137 x^2 z^2}{2}+\frac{2153679 x y^3}{28}+\frac{28315359}{7} x y^2 z\\
 & +\frac{659015811}{7} x y z^2-\frac{6866481456 x z^3}{7}-\frac{28405935 y^4}{7}-20973087 y^3
   z-\frac{10692058320 y^2 z^2}{7}\\
   &-\frac{205496736912 y z^3}{7}
   +\frac{1321162646760 z^4}{7}=0.
\end{align*}

We compute the zeta function of the scheme $X/\ZZ$. This means that we compute
\begin{align*}
Z(X/\FF_p, x)&=\exp \left (\sum _{\nu \ge 1} N_{\nu}x^{\nu}/\nu
\right )= \frac{1+a_px + b_p
x^2+c_px^3+pb_px^4+p^2a_px^5+p^3x^6}{(1-x)(1-px)}
\end{align*}
for the primes $p\not= 2,3,7, 73, 109, 829, 967$  (divisors of the discriminant of the ternary quartic above), where $N_{\nu} = \# X(\FF_{p^{\nu}})$. Since the curve has genus 3 we need only compute
$N_{\nu}$ for $\nu = 1,2,3$. The numerator in the above expression equals
\[
h_p (x) :=
\det\left (
1 - x \rho (\mr{Frob}_p) \mid H^1 _{\mr{et}} (X\otimes \overline{\QQ}, \QQ _l)
      \right ), \ \ l \not= p,
\]
where $\rho : \mr{Gal} (\overline{\QQ}/\QQ)\to \mr{GSp}(H^1 _{\mr{et}} (X\otimes \overline{\QQ}, \QQ _l))$ is the canonical
Galois representation in \'etale cohomology, and $\mr{Frob}_p$ is a Frobenius element at $p$. Since the Jacobian of $X$
has endomorphisms in the field $K = \QQ (\zeta _7 + \overline{\zeta }_7)$, and these endomorphisms are defined over $\QQ$,
$ V_l = H^1 _{\mr{et}} (X\otimes \overline{\QQ}, \QQ _l)$ becomes a free $K_{l} = K\otimes \QQ _l$-module of rank 2, and the operators
in $K$ commute with those in $\mr{Gal} (\overline{\QQ}/\QQ)$, so that the representation $\rho$ factors as
\[
\rho :  \mr{Gal} (\overline{\QQ}/\QQ)\to \mr{GL}_2 (V_l) \subset \mr{GSp}_6 (\QQ _l).
\]
This implies that the characteristic polynomials $h_p(x)$ factor
as $g_p(x) g_p^{\sigma}(x) g_p^{\sigma ^2}(x)$ for a quadratic
polynomial $g_p (x) \in O_K[x]$, where
$O_K = \ZZ [t]/(t^3+t^2-2t-1)$ is the ring of integers of $K$ and $\sigma$ generates the
Galois group of $K$ over $\QQ$. These polynomials, as well as the
trace of $\mr{Frob}_p$ are displayed in Table \ref{T:charpolys}. 
\begin{table}[h]
\begin{tabular}{|c|c|c|}
\hline
$p$ &  $g_p (x)$ &$ \mr{Trace}$ \\
\hline
5 & $1 - t x + 5 x^2$ & $-1$ \\
\hline
11 & $1 - t x + 11 x^2$ & $-1$ \\
\hline 13 & $1 + (3 - t) x + 13 x^2$ & $-10$  \\
\hline
17 & $1 + (-1 - 4 t) x + 17 x^2$ & $-1$  \\
\hline 19 & $1 + (6 - 3 t - 2 t^2) x + 19 x^2$ & $-11$ \\
 \hline 23 & $1 + (8 - t - 3 t^2) x + 23 x^2$ & $-10$  \\
 \hline 29 & $1 + (8 - 5 t - 6 t^2) x + 29 x^2 $& $1$ \\
\hline 31 & $1 + (7 - t - 2 t^2) x + 31 x^2 $ & $-12 $ \\
\hline 37 & $1 + (6 - 4 t - 5 t^2) x + 37 x^2$& $3$ \\
\hline41 & $1 + 8 x + 41 x^2 $ & $-24$\\
\hline 43 & $1 + (4 - t - 2 t^2) x + 43 x^2$& $ -3$ \\
\hline47 & $1 + (10 - t - 4 t^2) x + 47 x^2 $ & $ -11$ \\
\hline 53 & $ 1 + (6 + 2 t - 5 t^2) x + 53 x^2$ & $9 $ \\
\hline 59 & $1 + (10 - 6 t - 9 t^2) x + 59 x^2 $ & $9 $ \\
\hline 61 & $ 1 + (-2 + 3 t) x + 61 x^2  $ & $9$  \\
\hline 67 & $1 + (4 - t - 2 t^2) x + 67 x^2  $& $ -3$ \\
\hline 71 & $ 1 + (10 - 4 t - 5 t^2) x + 71 x^2$ & $-9 $  \\
\hline 79 & $1 + (7 - 8 t - 9 t^2) x + 79 x^2 $ & $16 $ \\
\hline 83 & $ 1 + (1 - 3 t - 6 t^2) x + 83 x^2 $ & $ 24$ \\
\hline 89 & $1 + (19 - t - 11 t^2) x + 89 x^2$ & $-3$  \\
\hline
\end{tabular}
\caption{ Factorization of $ h_p(x)= g_p(x) g_p^{\sigma}(x) g_p^{\sigma ^2}(x)$, trace of $\mr{Frob}_p $ at good primes.} \label{T:charpolys}
\end{table}

Using the solution to Serre's conjecture, \cite{KW} it follows that these quadratic factors 
coincide with the Hecke polynomials of modular form, modulo $l$. In fact, one can show that these quadratic factors coincide with the Hecke polynomials of a weight 2 newform.

\newpage

\end{document}